\newtheorem{theorem}{Theorem}[section]  
\theoremstyle{definition}
\theoremstyle{remark}
\title{Expected Crossing Numbers}
\author[Bojan Mohar]{Bojan Mohar}
\address{Department of Mathematics, Simon Fraser University,
8888 University Drive, Burnaby, British Columbia V5A 1S6, Canada}
\author[Tamon Stephen]{Tamon Stephen}
\subjclass[2010]{05C10, 60C05}
\begin{document}

%
%

\def\R{\mathbb{R}}
\def\x{\mathbf{x}}
\def\X{\mathbf{X}}
\def\zero{\mathbf{0}}
\def\D{\mathcal{D}}
\def\d{\mathbf{d}}
\def\XX{\mathbb{X}}
\def\E{\operatorname{\mathbb{E}}}
\def\K{\operatorname{K}}
\def\crn{\operatorname{cr}}
\def\eu{\operatorname{Eu}}
\def\edisc{\operatorname{Edisc}}
\def\mxx{m(\mathbf{X})}
\def\mx{m(\mathbf{x})}

%
%
\maketitle

\begin{abstract}
The expected value for the weighted crossing number of a randomly
weighted graph is studied. A variation of the Crossing Lemma for expectations is proved.
We focus on the case where the edge-weights
are independent random variables that are uniformly distributed on $[0,1]$.
\end{abstract}


%
%
\section{Introduction}\label{se:intro}

The {\it crossing number} of a graph is the minimum number of internal
intersections of edges in a drawing of the graph on the plane.
Computing the crossing number, even for complete graphs, is
a surprisingly challenging problem and an active area of
research \cite{RS09,SSV95,Vrto}.

The notion of the weighted crossing number, when the edges have weights
and each crossing counts as the product of the corresponding weights,
has been used in various situations, since it mimics the possibility of
having many edges in parallel.
In this paper we study the expected value of the weighted crossing
number of the complete graph $\K_n$ on $n$ vertices, where the weights
of edges are independent random variables.
We consider first the situation where the
weights are i.i.d.~variables with the uniform distribution
on $[0,1]$.  The first non-trivial case is $\K_5$;
we show through an involved calculation that
the expected value is $\frac{35921}{1108800}$.
We then briefly consider a simple discrete distribution
where the edges have value either $t$ or $u$, each with
probability $\frac{1}{2}$, and conclude that the expected
crossing number is not controlled by the first two moments
of the distribution on the edges.
Finally, we show that the expected crossing number of $\K_n$
retains the $\Theta(n^4)$ asymptotics of the usual crossing
number $\crn(\K_n)$ of complete graphs. This is proved by using
a similar recurrence as used for the usual crossing number of
complete graphs and, alternatively, by proving and applying
a variation of the Crossing Lemma for expectations.

%
%
\section{Preliminaries}\label{se:prelim}

Given a graph $G=(V,E)$, we denote its crossing number by $\crn(G)$.
This is the minimum over all drawings of $G$ in the Euclidean plane $\R^2$
of the number of crossings of edges in the drawing. All drawings are
assumed to have simple polygonal arcs representing the edges of the graph,
and it is assumed that each pair of edges involves at most one intersection
of their representing arcs.
Here and in the remainder of the paper, we consider only internal
intersections of edges. Formally, a {\em crossing} in a drawing $\D$ is
an unordered pair $\{e,f\}$ of edges whose arcs in $\D$ intersect each other
internally. We let $\XX(\D)$ denote the set of all crossings and set
$\crn(\D) = |\XX(\D)|$.

Given non-negative weights $w: E\to \R_+$ on the edges of $G$, we define the
{\it crossing weight} of a drawing $\D$ of $G$ as:
$$
  \crn(\D,w) = \sum_{\{e,f\}\in \XX(\D)} w(e) w(f).
$$
We define the {\it weighted crossing number} of a weighted graph $G$ as:
\begin{equation}\label{eq:cr}
   \crn(G,w) = \min_{\D} \crn(\D,w).
\end{equation}
For a fixed graph, the function $\crn(G,\cdot)$ is also called
the {\em crossing function} for $G$.
We take the domain of $\crn(G,\cdot)$ to be $\R^{E}_+$.
We remark that $\crn(G,0)=0$, $\crn(G,w) \ge 0$ and
$\crn(G,\cdot) \equiv \zero$ if and only if $\crn(G)=0$.
The function $\crn(G,\cdot)$ is piecewise quadratic in $w$,
and the chambers defined by these pieces correspond to
(groups of) optimal drawings for the contained weightings;
the forms in the chambers are neither convex nor concave.
If ${\mathbf 1}\in\R^{E}_+$ is the constant all-1 function,
then $\crn(G) = \crn(G,{\mathbf 1})$.

The crossing function of any $n$-vertex graph is just a specialization
of the crossing function $\crn(K_n,w)$ of the complete graph $K_n$,
where we put weight 0 for the non-edges in the graph. In this sense
the crossing functions of complete graphs contain information about
crossing numbers of all graphs. This universality property was the
main goal to introduce this notion in \cite{Mo1,Mo2} and to propose
its study.

Note that we allow the edges to be represented by any (polygonal)
line, they need not be straight lines.  The related question
of rectilinear crossing numbers is also interesting and
well-studied.  While rectilinear crossing number is in some
cases larger than the usual crossing number \cite{Guy},
they do not differ in the computations performed in this paper.
As in the unweighted case, minimal drawings can be obtained
without using double crossings (pairs of edges that cross
more than once).

%
%
\section{Computation of the expected crossing number}
\label{se:eu5}

We begin by considering the expected crossing number of the complete
graph $\K_n$ for some small values of $n$.  We take the weights
on the edges to be independently identically distributed
random variables, with uniform distributions on the interval $[0,1]$.
Let us denote the expected value of $\crn(\K_n,w)$ under this
distribution as $\eu(n)$.

For $n \le 4$, the graph can be drawn without crossings,
so $\eu(n)=0=\crn(\K_n)$.  For $n \ge 5$, we have $0 < \eu(n) < \crn(\K_n)$.
In this section, we compute $\eu(5)$ directly from the definition
of expectation. Our somewhat cumbersome case analysis can also be viewed
as determination of the piecewise quadratic chambers for the crossing function
of $\K_5$.

We will number the edges of $\K_5$ as in Figure~\ref{fig:k5}.

\begin{figure}[hb]
\begin{center}
\includegraphics[height=50mm]{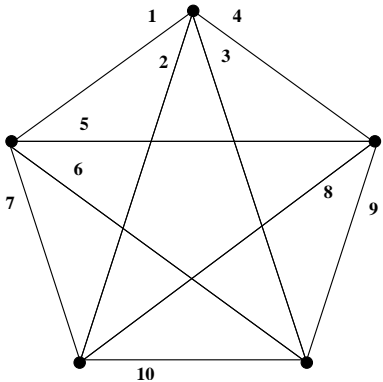}
\end{center}
\caption{Edge labelling of $K_5$} \label{fig:k5}
\end{figure}

We will denote the random weight assigned to the $i$th edge by
$\X_i$, $i=1,\dots,10$.  We note that $\crn(\K_5)=1$
and by symmetry, for any two non-adjacent edges, $\K_5$ can be drawn
so that those two edges are the single pair of crossing edges.
Hence:
\begin{align*}
\eu(5)=\E[\min( & \X_1 \X_8, \X_1 \X_9, \X_1 \X_{10}, \X_2 \X_5,
 \X_2 \X_6, \X_2 \X_9, \X_3 \X_5, \X_3 \X_7, \\
 & \X_3 \X_8, \X_4 \X_6, \X_4 \X_8, \X_4 \X_{10}, \X_5 \X_{10},
  \X_6 \X_8, \X_7 \X_9)]
\end{align*}
We abbreviate the quantity inside the expectation as $\mxx$.

This is a problem in order statistics, see for instance \cite{DN03}.
The direct way to obtain $\eu(5)$ is to evaluate:
\begin{equation}\label{eq:integral}
\int_0^1 \int_0^1 \ldots \int_0^1 \mx \d x_1 \ldots \d x_9 \d x_{10}
\end{equation}
where $\mx$ is the function of $\x \in \R^{10}$ corresponding to the
random variables of $\mxx$.
To do this we break (\ref{eq:integral}) into $10!$ terms based on the
increasing order of the variables, i.e.~we compute (\ref{eq:integral}) via
the sum:
\begin{equation}\label{eq:pieces}
\sum_{\sigma \in S_{10}}
\int_0^1 \int_0^{x_{\sigma(10)}} \int_0^{x_{\sigma(9)}} \ldots
  \int_0^{x_{\sigma(2)}} \mx \d x_{\sigma(1)} \ldots
  \d x_{\sigma(9)} \d x_{\sigma(10)}
\end{equation}
Here the permutations $\sigma \in S_{10}$ index the
possible orderings of the random variables $\X$.
This sum has $10!$ terms, but they can be grouped into a manageable
number of cases.
To begin, we note that by reordering the vertices,
we can assume that $\X_1$ takes the smallest value, and,
using the labelling of Figure~\ref{fig:k5},
$\X_2 \le \X_3, \X_4, \X_5, \X_6, \X_7$
and $\X_3 \le \X_4$.
This corresponds to a labelling of $\K_5$ based on $\X$, breaking ties
arbitrarily. Actually, we may assume that the weights $\X_i$, $1\le i\le 10$,
are pairwise different, since the set on which an equality occurs is of measure
zero. Thus, each case with the above assumptions corresponds
to $120$ terms in (\ref{eq:pieces}).

With these assumptions, the minimum of the 15 pairs of random
variables in $\mxx$ must be attained at
one of $\X_1 \X_8, \X_1 \X_9, \X_1 \X_{10}, \X_2 \X_5, \X_2 \X_6, \X_3 \X_7$
since $\X_1 \X_9 \le \X_2 \X_9, \X_7 \X_9$;
$\X_2 \X_5 \le \X_3 \X_5;$ et cetera.
We note that these six terms are symmetric in the variables
$\X_8, \X_9, \X_{10}$, and also in $\X_5, \X_6$.
Thus we will also take $\X_8 =\min(\X_8,\X_9,\X_{10})$
and $\X_5=\min(\X_5,\X_6)$, and treat the remaining cases
by symmetry.  Combined with our assumptions on $\X_1, \X_2$
and $\X_3$ we break the $10!$ terms of (\ref{eq:pieces})
into groups of 720 terms based on symmetry;
this leaves us with 5040 terms up to these symmetries.
It also allows us to simplify our integrand
further to $\min(\X_1 \X_8, \X_2 \X_5, \X_3 \X_7)$.

We now divide into cases based on the relative orderings of some
of the remaining variables.  We remark that, depending on
the order of the variables, the integrand may simplify further --
for instance if the two smallest variables are $\X_1$ and $\X_8$,
the minimum of the three terms will always be $\X_1 \X_8$.
We organize the cases by how the integrand simplifies.

\begin{flushleft}
\underline{Case 1}: Orderings which ensure
$\X_1 \X_8 = \min(\X_1 \X_8, \X_2 \X_5, \X_3 \X_7)$.
\end{flushleft}

In these cases, the computation is relatively simple:
the integral depends only on which position $\X_8$
occupies in the order of the $\X_i$'s.
It can be anywhere from the second to fifth smallest.
Suppose it is the second smallest, i.e.~that the order of the
variables is:
$$\X_1 \le \X_8 \le \X_{i_3} \le \X_{i_4} \le \X_{i_5} \le \X_{i_6}
  \le \X_{i_7} \le \X_{i_8} \le \X_{i_9} \le \X_{i_{10}}.$$
Then we compute:
$$
\int_0^1 \int_0^{x_{i_{10}}} \int_0^{x_{i_9}}
 \ldots \int_0^{x_{i_4}}
 \int_0^{x_{i_3}} \int_0^{x_8} x_1 x_8 \d x_1 \d x_8 \d x_{i_3}
  \ldots \d x_{i_8} \d x_{i_9} \d x_{i_{10}}
$$
$$
 = \int_0^1 \int_0^{x_{i_{10}}} \int_0^{x_{i_9}}
  \ldots \int_0^{x_{i_4}}
 \int_0^{x_{i_3}} \frac{x_8^3}{2} \d x_8 \d x_{i_3}
 \ldots \d x_{i_8} \d x_{i_9} \d x_{i_{10}}
$$
$$
 = \int_0^1 \int_0^{x_{i_{10}}} \int_0^{x_{i_9}}
 \ldots \int_0^{x_{i_4}}
 \frac{x_{i_3}^4}{2 \cdot 4} \d x_{i_3}
 \ldots \d x_{i_8} \d x_{i_9} \d x_{i_{10}}
$$
\begin{center}$\ldots$\end{center}
\vspace{2mm}
$$ = \int_0^1 \frac{x_{i_{10}}^{11}}{2 \cdot 4 \cdot 5 \cdot 6 \cdot 7
  \cdot 8 \cdot 9 \cdot 10 \cdot 11} \d x_{i_{10}}= \frac{3}{12!}
$$

A similar calculation shows that if $\X_8$ is $i^{th}$ smallest
variable, the integral for a fixed ordering of the remaining
variables will be $\frac{i+1}{12!}$.

Now observe that there are $\frac{8!}{24}=1680$ ways
of ordering the variables
with $\X_1$ as the smallest variable,
$\X_8$ as the second smallest,
$\X_2 \le \X_3, \X_4, \X_5, \X_6, \X_7$,
$\X_3 \le \X_4$, and $\X_5 \le \X_6$.
We remark that our symmetry assumptions guarantee that either
$\X_2$ or $\X_8$ is the second smallest variable, so in the
remainder of the analysis $\X_2$ will always be the second smallest
variable.

Thus if $\X_8$ is the third smallest variable, we have
fixed the order of the first 3 variables, and the remaining
variables can be ordered in $7!/4=1260$ ways, accounting for the
facts that $\X_3 \le \X_4$ and $\X_5 \le \X_6$.

If $\X_8$ is the fourth smallest variable, we have two
possible choices for the third smallest: $\X_3$ and $\X_7$.
In the former case we have $6!/2=360$ possible ordering of
the remaining variables (accounting for $\X_5 \le \X_6$),
while for the latter case we have $6!/4=180$ possible
orderings.

Finally, under the assumptions of Case 1,
$\X_8$ can be the fifth smallest variable only if
the third and fourth variables are $\X_3$ and $\X_4$,
respectively.  There are $5!/2$ orderings of the remaining
variables compatible with this.
We remark that in this case, we can never have $\X_5 \le \X_8$,
or $\X_3, \X_7 \le \X_8$ since then it may be the case that
$\X_1 \X_8$ is not minimal, depending on the values chosen.

This already covers the majority of the cases, 3540 of the 5040.
Thus the terms in (\ref{eq:pieces}) corresponding to these
orderings of the variables have total weight per symmetry class of:
$$1680 \cdot \frac{3}{12!} +
1260 \cdot \frac{4}{12!} +
540 \cdot \frac{5}{12!} +
60 \cdot \frac{6}{12!} = \frac{13140}{12!}.
$$

\begin{flushleft}
\underline{Case 2}: Orderings which ensure
$\min(\X_1 \X_8, \X_2 \X_5, \X_3 \X_7)$ is either
$\X_1 \X_8$ or $\X_2 \X_5$.
\end{flushleft}

In these cases, $\X_2$ and $\X_5$ are between $\X_1$ and $\X_8$.
However, $\X_3$ and $\X_7$ are not both between $\X_2$ and $\X_5$.
The integrand will be $\mxx=\min(\X_1 \X_8, \X_2 \X_5)$,
and the two smallest variables are $\X_1$ and $\X_2$.
We break into subcases based on the positions of $\X_5$ and $\X_8$.
Only the simplest case is described in detail.

\begin{flushleft}
\underline{Subcase 2i}: The four smallest variables are
$\X_1, \X_2, \X_5$ and $\X_8$.
Then we need to evaluate:
\end{flushleft}
$$
\int_0^1 \int_0^{x_{i_{10}}}  \ldots
 \int_0^{x_{i_6}} \int_0^{x_{i_5}} \int_0^{x_8}
 \int_0^{x_5} \int_0^{x_2} \min(x_1 x_8, x_2 x_5) \d x_1 \d x_2 \d x_5
 \d x_8 \d x_{i_5} \ldots \d x_{i_9}
 \d x_{i_{10}}
$$
$$
= \int_0^1 \int_0^{x_{i_{10}}}  \ldots
 \int_0^{x_{i_6}} \int_0^{x_{i_5}} \int_0^{x_8}
 \int_0^{x_5} \int_0^{\frac{x_2 x_5}{x_8}} x_1 x_8 \d x_1 \d x_2 \d x_5
 \d x_8 \d x_{i_5} \ldots \d x_{i_9} \d x_{i_{10}}
$$
$$
\qquad + \int_0^1 \int_0^{x_{i_{10}}}  \ldots
 \int_0^{x_{i_6}} \int_0^{x_{i_5}} \int_0^{x_8}
 \int_0^{x_5} \int_{\frac{x_2 x_5}{x_8}}^{x_2} x_2 x_5 \d x_1 \d x_2 \d x_5
 \d x_8 \d x_{i_5} \ldots \d x_{i_9} \d x_{i_{10}}
$$
$$
= \int_0^1 \int_0^{x_{i_{10}}}  \ldots
 \int_0^{x_{i_6}} \int_0^{x_{i_5}} \int_0^{x_8}
 \int_0^{x_5} \left( \frac{x_2^2 x_5^2}{2 x_8}
  + x_2^2 x_5 - \frac{x_2^2 x_5^2}{x_8} \right) \d x_2 \d x_5
 \d x_8 \d x_{i_5} \ldots \d x_{i_9} \d x_{i_{10}}
$$
$$
= \int_0^1 \int_0^{x_{i_{10}}}  \ldots
 \int_0^{x_{i_6}} \int_0^{x_{i_5}} \int_0^{x_8}
  \left( \frac{x_5^4}{3} - \frac{x_5^5}{6 x_8} \right) \d x_5
 \d x_8 \d x_{i_5} \ldots \d x_{i_9} \d x_{i_{10}}
$$
$$
= \int_0^1 \int_0^{x_{i_{10}}}  \ldots
 \int_0^{x_{i_6}} \int_0^{x_{i_5}}
  \left( \frac{x_8^5}{15} - \frac{x_8^5}{36} \right)
 \d x_8 \d x_{i_5} \ldots \d x_{i_9} \d x_{i_{10}}
$$
$$
= \int_0^1 \int_0^{x_{i_{10}}}  \ldots
 \int_0^{x_{i_6}}
  \frac{7 x_{i_5}^6}{180}\,
 \d x_{i_5} \ldots \d x_{i_9} \d x_{i_{10}} =
 \int_0^1 \frac{7 \cdot 5! \, x_{i_{10}}^{11}}{180 \cdot 11!} \d x_{i_{10}}
 = \frac{14}{3 \cdot 12!}.
$$
The number of orderings of the variables in Subcase 2i
up to symmetries is $\frac{6!}{2}=360$, since we require $\X_3 \le \X_4$.

The integrals in the remaining cases are essentially similar, so we will
simply list the initial sequence of integrands and then compute the
number of orderings of the variables corresponding to each case.

\begin{flushleft}
\underline{Subcase 2ii}: The five smallest variables are
$\X_1, \X_2, \X_5, \X_j$ and $\X_8$.  This produces the
following integrands:
\end{flushleft}
$$
\min(x_1 x_8, x_2 x_5);
x_2^2 x_5 - \frac{x_2^2 x_5^2}{2 x_8};
\frac{x_5^4}{3}-\frac{x_5^5}{6 x_8};
\frac{x_j^5}{15}-\frac{x_j^6}{36 x_8};
\frac{x_8^6}{140}; \frac{x_{i_6}^7}{140 \cdot 7}; \ldots
\frac{6! x_{i_{10}}}{140 \cdot 11!}; \frac{36}{7 \cdot 12!}.
$$
There are three possibilities for $j$: 3, 6 and 7.
When $j=3$ we have $5!$ cases, and when $j=6$ or $j=7$
we have $\frac{5!}{2}$ cases as we need to account for
the fact that $\X_3 \le \X_4$ in the remaining variables.
This is the total of 240 orderings.

\begin{flushleft}
\underline{Subcase 2iii}: The six smallest variables are
$\X_1, \X_2, \X_5, \X_j, \X_k$ and $\X_8$.
The integrands remain as in the previous case up to
$\frac{x_j^5}{15}-\frac{x_j^6}{36 x_8}$.
The next integrands will be
$\frac{x_k^6}{90}-\frac{x_k^7}{36 \cdot 7 x_8}$
and $\frac{11 x_8^7}{7 \cdot 1440}$, the remaining simple
integrations give
$\frac{11}{2 \cdot 12!}$.
\end{flushleft}

There are seven possibilities for $(j,k)$ in this case:
$(3,4), (3,6), (3,7), (6,3), (7,3), (6,7)$ and $(7,6)$.
The first five of these are each associated to $4!$ orderings
of the remaining variables, while the last two are each
associated to $\frac{4!}{2}$.   This is a total of 144 orderings.

\begin{flushleft}
\underline{Subcase 2iv}: The seven smallest variables are
$\X_1, \X_2, \X_5, \X_j, \X_k, \X_l$ and $\X_8$.
We proceed from the integrand
$\frac{x_k^6}{90}-\frac{x_k^7}{36 \cdot 7 x_8}$
to $\frac{x_l^7}{7 \cdot 90}
 -\frac{x_l^8}{7 \cdot 8 \cdot 36 x_8}$,
and then to $\frac{13 x_8^8}{7 \cdot 8 \cdot 9 \cdot 180}$.
Continuing to the end, the integral is
$\frac{52}{9 \cdot 12!}$.
\end{flushleft}

There are twelve possibilities for $(j,k,l)$ arising from
choosing three of $3,4,6,7$ and requiring 4 to be preceded
by 3.  Each of these has $3!$ orderings of the remaining 3
variables, for a total of 72 orderings.

\begin{flushleft}
\underline{Subcase 2v}: The eight smallest variables are
$\X_1, \X_2, \X_5, \X_j, \X_k, \X_l, \X_m$ and $\X_8$.
We proceed from the integrand
$\frac{x_l^7}{7 \cdot 90} -\frac{x_l^8}{7 \cdot 8 \cdot 36 x_8}$
to
$\frac{x_m^8}{7 \cdot 8 \cdot 90} -
 \frac{x_m^9}{7 \cdot 8 \cdot 9 \cdot 36 x_8}$
and then to $\frac{x_8^9}{7 \cdot 8 \cdot 9 \cdot 120}$.
Continuing to the end, the integral is
$\frac{6}{12!}$.
\end{flushleft}

Again there are 12 possibilities, as $(j,k,l,m)$ are chosen
from $3,4,6,7$ with 3 preceding 4.  There are 2 ways of
arranging the remaining two variables, for a total of 24
orderings.

\begin{flushleft}
\underline{Subcase 2vi}: The five smallest variables are
$\X_1, \X_2, \X_j, \X_5$ and $\X_8$.  The sequence of
integrands that we see is then:
\end{flushleft}
$$
\min(x_1 x_8, x_2 x_5);
x_2^2 x_5 - \frac{x_2^2 x_5^2}{2 x_8};
\frac{x_j^3 x_5}{3}-\frac{x_j^3 x_5}{6 x_8};
\frac{x_5^5}{12}-\frac{x_5^6}{24 x_8};
\frac{x_8^6}{126}; \frac{x_{i_6}^7}{126 \cdot 7}; \ldots
\frac{6! x_{i_{10}}}{126 \cdot 11!}; \frac{40}{7 \cdot 12!}.
$$

In this case, $j$ must be either 3 or 7.  If it is 3,
there are $5!$ ways of ordering the remaining variables,
and if it is 7 there are $\frac{5!}{2}$ ways of ordering
the remaining variables, for a total of 180 orderings.

\begin{flushleft}
\underline{Subcase 2vii}: The six smallest variables are
$\X_1, \X_2, \X_j, \X_5, \X_k$ and $\X_8$.
We then see the same integrands through
$\frac{x_5^5}{12}-\frac{x_5^6}{24 x_8}$
followed by $\frac{x_k^6}{72}-\frac{x_k^7}{7 \cdot 24 x_8}$,
$\frac{x_8^7}{7 \cdot 72}-\frac{x_8^7}{7 \cdot 8 \cdot 24}$.
Following the remaining routine integrations, we get
$\frac{25}{4 \cdot 12!}$.
\end{flushleft}

The possibilities for $(j,k)$ are similar to those of Subcase 2iii,
but 6 cannot be used in the first position.  This leaves
$(3,4), (3,6), (3,7), (7,3)$ and $ (7,6)$.
The first four cases correspond to $4!$ orderings of the remaining
variables, while $(7,6)$ corresponds to $\frac{4!}{2}$ as
3 must precede 4.  This gives a total of 108 orderings.

\begin{flushleft}
\underline{Subcase 2viii}: The seven smallest variables are
$\X_1, \X_2, \X_j, \X_5, \X_k, \X_l$ and $\X_8$.
We proceed as in the previous subcase through
$\frac{x_k^6}{72}-\frac{x_k^7}{7 \cdot 24 x_8}$, then to
$\frac{x_l^7}{7 \cdot 72}-\frac{x_l^8}{7 \cdot 8 \cdot 24 x_8}$,
and $\frac{x_8^8}{7 \cdot 8 \cdot 72}-\frac{x_8^8}{7 \cdot 8 \cdot 9 \cdot 24}$.
The remaining integrations bring us to
$\frac{20}{3 \cdot 12!}$.
\end{flushleft}

In this subcase we have $(j,k,l)$ chosen from $3,4,6,7$ with
the conditions that 4 must be preceded by 3, and 6 may not
appear in the first position.  This second condition removes
3 of the 12 orderings as compared to Subcase 2iv, leaving us
with 9.  There are always $3!$ orderings of the remaining
variables, giving a total of 54 orderings for this case.

\begin{flushleft}
\underline{Subcase 2ix}: The eight smallest variables are
$\X_1, \X_2, \X_j, \X_5, \X_k, \X_l, \X_m$ and $\X_8$.
We proceed as in the previous subcase through
$\frac{x_l^7}{7 \cdot 72}-\frac{x_l^8}{7 \cdot 8 \cdot 24 x_8}$.
The next integrand is
$\frac{x_m^8}{7 \cdot 8 \cdot 72}-
  \frac{x_m^9}{7 \cdot 8 \cdot 9 \cdot 24 x_8}$, followed by
$\frac{7 x_8^9}{24 \cdot 30 \cdot 7 \cdot 8 \cdot 9}$,
and eventually $\frac{7}{12!}$.
\end{flushleft}

There are 9 possible choices for $(j,k,l,m)$ since we have
the same conditions as in the previous subcase, with the
remaining number assigned to $m$.  There are two orders
for the remaining two variables, giving a total of 18
orderings.

\begin{flushleft}
\underline{Subcase 2x}: The six smallest variables are
$\X_1, \X_2, \X_j, \X_k, \X_5$ and $\X_8$.
We proceed as in the previous subcase through the integrand
$\frac{x_j^3 x_5}{3}-\frac{x_j^3 x_5^2}{6 x_8}$
Continuing, we see integrands
$\frac{x_k^4 x_5}{12}-\frac{x_k^4 x_5^2}{24 x_8}$,
$\frac{x_5^6}{60}-\frac{x_5^7}{120 x_8}$, and
$\frac{x_8^7}{7 \cdot 60}-\frac{x_8^7}{16 \cdot 60}$
on our way to $\frac{27}{4 \cdot 12!}$.
\end{flushleft}

In fact this case requires $j=3$ and $k=4$, since we
can't have either $\X_6$ or both of $\X_3$ and $\X_7$
precede $\X_5$.
There are $4!=24$ ways of ordering the remaining variables.

\begin{flushleft}
\underline{Subcase 2xi}: The seven smallest variables are
$\X_1, \X_2, \X_3, \X_4, \X_5, \X_l$ and $\X_8$.
This matches the previous subcase through
$\frac{x_5^6}{60}-\frac{x_5^7}{120 x_8}$;
the next two integrands are
$\frac{x_l^7}{7 \cdot 60} - \frac{x_l^8}{8 \cdot 120 x_8}$ and
$\frac{x_8^8}{7 \cdot 8 \cdot 60}-\frac{x_8^8}{8 \cdot 9 \cdot 120}$.
Continuing we arrive at $\frac{22}{3 \cdot 12!}$.
\end{flushleft}

We must have $l=6$ or $l=7$.  There are $3!$ ways of ordering
the remaining variables for a total of 12 orderings.

\begin{flushleft}
\underline{Subcase 2xii}: The eight smallest variables are
$\X_1, \X_2, \X_3, \X_4, \X_5, \X_l, \X_m$ and $\X_8$.
This matches the previous subcase through
$\frac{x_l^7}{7 \cdot 60} - \frac{x_l^8}{8 \cdot 120 x_8}$.
Next we have
$\frac{x_m^8}{7 \cdot 8 \cdot 60}-
 \frac{x_m^9}{8 \cdot 9 \cdot 120 x_8}$.
and
$\frac{x_8^9}{7 \cdot 8 \cdot 9 \cdot 60}-
 \frac{x_8^9}{8 \cdot 9 \cdot 10 \cdot 120}$.
Continuing we arrive at $\frac{39}{5 \cdot 12!}$.
\end{flushleft}

We must have $(l,m)$ equal to $(6,7)$ or $(7,6)$,
and there are 2 ways of ordering the remaining 2
variables, for a total of 4 orderings.

This completes Case 2, which contains 1240 possible orderings
up to the symmetries.
The terms in (\ref{eq:pieces}) corresponding to these
orderings of the variables have total weight per symmetry class of:
$360 \cdot \frac{14}{3 \cdot 12!} + \ldots +
4 \cdot \frac{39}{5 \cdot 12!} = \frac{235797}{35 \cdot 12!}$.

\begin{flushleft}
\underline{Case 3}: Orderings which ensure
$\min(\X_1 \X_8, \X_2 \X_5, \X_3 \X_7)$ is either
$\X_1 \X_8$ or $\X_3 \X_7$.
\end{flushleft}

Since $\X_2$ is the second smallest variable, these will
occur only when $\X_3, \X_7 \le \X_8$, but $\X_8 \le \X_5$.
Only the simplest case is described in detail.

\begin{flushleft}
\underline{Subcase 3i}: The five smallest variables are
$\X_1, \X_2, \X_3, \X_7$ and $\X_8$  or
$\X_1, \X_2, \X_7, \X_3$ and $\X_8$.
\end{flushleft}

We proceed to evaluate:
$$
\int_0^1 \int_0^{x_{i_{10}}}  \ldots
 \int_0^{x_{i_6}} \int_0^{x_8} \int_0^{x_{7}}
 \int_0^{x_3} \int_0^{x_2} \min(x_1 x_8, x_3 x_{7}) \d x_1 \d x_2 \d x_3
 \d x_7 \d x_8 \ldots \d x_{i_9} \d x_{i_{10}}
$$
The inner integral is piecewise linear in $x_1$, with a single
break point at $x_1 = \frac{x_3 x_{7}}{x_8}$, however
$\frac{x_3 x_{7}}{x_8}$ may or may not be greater than $x_2$.
We decompose the inner integral as:
$$\int_0^{\min(x_2,\frac{x_3 x_{7}}{x_8})} x_1 x_8 \d x_1 +
 \int_{\min(x_2,\frac{x_3 x_{7}}{x_8})}^{x_2} x_3 x_{7} \d x_1$$
Evaluating this integral leaves us with the new inner integral:
$$\int_0^{x_3} \left(\frac{1}{2} \min(x_2, \frac{x_3 x_{7}}{x_8})^2 x_8 +
 x_2 x_3 x_{7} - \min(x_2, \frac{x_3 x_{7}}{x_8}) x_3 x_{7}\right) \d x_2$$
This again needs to be split, this time with breakpoint at
$x_2 = \frac{x_3 x_{7}}{x_8}$:
$$\int_0^{\frac{x_3 x_{7}}{x_8}} \left(\frac{x_2^2 x_8}{2} + x_2 x_3 x_{7}
 - x_2 x_3 x_{7}\right)  \d x_2 +
\int_{\frac{x_3 x_{7}}{x_8}}^{x_3} \left(\frac{x_3^2 x_{7}^2}{2 x_8}
 + x_2 x_3 x_{7} - \frac{x_3^2 x_{7}^2}{x_8}\right) \d x_2
$$
Happily, we see some cancelation of terms, both before evaluating
the integral and after.  This yields:
$$\int_0^{x_7} \left(\frac{x_3^3 x_{7}^3}{6 x_8^2} +
 \frac{x_3^3 x_{7}}{2} - \frac{x_3^3 x_{7}^2}{2 x_8}\right) \d x_3
$$
We proceed through the following integrands:
$$\frac{x_{7}^7}{24 x_8^2} + \frac{x_{7}^5}{8} - \frac{x_{7}^6}{8 x_8};
\ \frac{11 x_8^6}{1344}; \ \ldots; \ \frac{165}{28 \cdot 12!}
$$

Accounting for the fact that $\X_5 \le \X_6$, there are
$\frac{5!}{2}$ orderings of the remaining variables.
With the two orderings of $\X_3$ and $\X_7$ (which do not
affect the computation of the integral),
we have the total of 120 orderings corresponding to this subcase.

\begin{flushleft}
\underline{Subcase 3ii}: The six smallest variables are
$\X_1, \X_2, \X_3, \X_{7}, \X_4$ and $\X_8$, with $\X_3$
and $\X_7$ possibly switched.
\end{flushleft}

The integrands then remain identical through
$\frac{x_{7}^7}{24 x_8^2} + \frac{x_{7}^5}{8} - \frac{x_{7}^6}{8 x_8}$.
These are followed by
$\frac{x_4^8}{8 \cdot 24 x_8^2} + \frac{x_4^6}{6 \cdot 8}
  - \frac{x_4^7}{7 \cdot 8 x_8}$ and
$\frac{x_8^7}{756}$.  Subsequent integrations yield
$\frac{20}{3 \cdot 12!}$.
There are $\frac{4!}{2}$ orderings of the remaining variables,
and $\X_3$ and $\X_7$ can be switched, giving a total of 24
orderings.

\begin{flushleft}
\underline{Subcase 3iii}: The six smallest variables are
$\X_1, \X_2, \X_3, \X_4, \X_{7}$ and $\X_8$.
\end{flushleft}

The integrands are identical to Subcase 3i until
$\frac{x_3^3 x_{7}^3}{6 x_8^2} +
 \frac{x_3^3 x_{7}}{2} - \frac{x_3^3 x_{7}^2}{2 x_8}$.
We proceed to:
$$\frac{x_4^4 x_{7}^3}{24 x_8^2} + \frac{x_4^4 x_{7}}{8}
 - \frac{x_4^4 x_{7}^2}{8 x_8}; \quad
\frac{x_{7}^8}{120 x_8^2} + \frac{x_{7}^6}{40} - \frac{x_{7}^7}{40 x_8};
 \quad
\frac{83 x_8^7}{60480}; \quad \ldots \quad \frac{83}{12 \cdot 12!}
$$
There are $\frac{4!}{2} = 12$ orderings of the remaining
variables.

This completes Case 3, which comprises 156 orderings of the
variables.
The terms in (\ref{eq:pieces}) corresponding to these
orderings of the variables have total weight per symmetry class of
$\frac{6651}{7 \cdot 12!}$.

\begin{flushleft}
\underline{Case 4}: Orderings in which
$\min(\X_1 \X_8, \X_2 \X_5, \X_3 \X_7)$ can be
attained at all 3 terms.
\end{flushleft}

This comprises a small number of orderings that feature a
messy inner integral.  We note that in all these cases
the two smallest variables are $\X_1$ and $\X_2$, while
the third smallest variable is either $\X_3$ or $\X_7$.
Our integrand is symmetric in $\X_3$ and $\X_7$, so we
will do the computation only with $\X_3$ as the smaller
of the two variables.  We will proceed to evaluate the
three innermost integrals before breaking into subcases,
assuming that the fourth smallest variable is $\X_j$:

$$ \int_0^{x_j}
 \int_0^{x_3} \int_0^{x_2} \min(x_1 x_8, x_2 x_5, x_3 x_7) \d x_1 \d x_2 \d x_3
$$
$$
= \int_0^{x_j} \int_0^{x_3}
   \int_0^{\frac{\min(x_2 x_5,x_3 x_7)}{x_8}} x_1 x_8
  \d x_1 
 + 
   \int_{\frac{\min(x_2 x_5,x_3 x_7)}{x_8}}^{x_2}
   \min(x_2 x_5, x_3 x_7)
  \d x_1 \d x_2 \d x_3
$$
$$
= \int_0^{x_j} \int_0^{x_3}
   \left( \frac{\min(x_2 x_5,x_3 x_7)^2}{2 x_8}
 +   x_2 \min(x_2 x_5,x_3 x_7)
 -   \frac{\min(x_2 x_5,x_3 x_7)^2}{x_8} \right)
  \d x_2 \d x_3
$$
$$
= \int_0^{x_j} \int_0^{x_3}
   \left( x_2 \min(x_2 x_5,x_3 x_7)
   - \frac{\min(x_2 x_5,x_3 x_7)^2}{2 x_8}\right)
  \d x_2 \d x_3
$$
$$
= \int_0^{x_j} \int_0^{\frac{x_3 x_7}{x_5}}
   \left(x_2^2 x_5 - \frac{x_2^2 x_5^2}{2 x_8} \right) \d x_2 \d x_3
  + \int_{\frac{x_3 x_7}{x_5}}^{x_3}
   \left( x_2 x_3 x_7 - \frac{x_3^2 x_7^2}{2 x_8} \right) \d x_2
\d x_3
$$
$$
= \int_0^{x_j} \left(
   \frac{x_3^3 x_7^3 x_5}{3 x_5^3} - \frac{x_3^3 x_7^3 x_5^2}{6 x_8 x_5^3}
  + \frac{x_3^3 x_7}{2} - \frac{x_3^3 x_7^2}{2 x_8}
  - \frac{x_3^3 x_7^3}{2 x_5^2} + \frac{x_3^3 x_7^3}{2 x_8 x_5} \right)
\d x_3
$$
$$
= \int_0^{x_j} x_3^3
  \left[
  \frac{x_7}{2} - \frac{x_7^2}{2 x_8}
  - \frac{x_7^3}{6 x_5^2} + \frac{x_7^3}{3 x_8 x_5}
  \right]
\d x_3
= \frac{x_j^4}{4}
  \left[
  \frac{x_7}{2} - \frac{x_7^2}{2 x_8}
  - \frac{x_7^3}{6 x_5^2} + \frac{x_7^3}{3 x_8 x_5}
  \right]
$$

\medskip

We now proceed to the subcases which are based on the
ordering of the remaining variables.

\begin{flushleft}
\underline{Subcase 4i}: The six smallest variables are
$\X_1, \X_2, \X_3, \X_7, \X_5$ and $\X_8$.
Then $\X_j=\X_7$ and we need to evaluate:
\end{flushleft}
$$
\int_0^1 \int_0^{x_{i_{10}}}
 \int_0^{x_{i_9}} \int_0^{x_{i_8}}
 \int_0^{x_{i_7}} \int_0^{x_8} \int_0^{x_5}
  \left( \frac{x_7^5}{8} - \frac{x_7^6}{8 x_8}
  - \frac{x_7^7}{24 x_5^2} + \frac{x_7^7}{12 x_8 x_5} \right)
  \d x_7 \d x_5 \d x_8 \d x_{i_7} \d x_{i_8} \d x_{i_9}
 \d x_{i_{10}}
$$
$$
= \frac{1}{32} \int_0^1 \int_0^{x_{i_{10}}}
 \int_0^{x_{i_9}} \int_0^{x_{i_8}} \int_0^{x_{i_7}}
 \int_0^{x_8}
  \left( \frac{x_5^6}{2} - \frac{5 x_5^7}{21 x_8} \right)
  \d x_5 \d x_8 \d x_{i_7} \d x_{i_8} \d x_{i_9}
 \d x_{i_{10}}
$$
$$
= \frac{1}{768} \int_0^1 \int_0^{x_{i_{10}}}
 \int_0^{x_{i_9}} \int_0^{x_{i_8}} \int_0^{x_{i_7}}
  {x_8^7}
  \d x_8 \d x_{i_7} \d x_{i_8} \d x_{i_9}
 \d x_{i_{10}}
  = \frac{7!}{768 \cdot 12!} = \frac{105}{16 \cdot 12!}.
$$

\medskip

As noted previously, there is a second ordering corresponding
to this subcase, in which $\X_3$ and $\X_7$ are reversed,
and there are $4!$ orderings of the remaining variables,
giving us 48 orderings in this case.

\begin{flushleft}
\underline{Subcase 4ii}: The seven smallest variables are
$\X_1, \X_2, \X_3, \X_7, \X_5$, $\X_k$ and $\X_8$.
\end{flushleft}

This calculation is quite similar to the previous one until
it reaches the integral with integrand
$\frac{1}{32} [\frac{x_5^6}{2} - \frac{5 x_5^7}{21 x_8}]$.
Subsequent integrands are:
$$\frac{1}{32} \left[\frac{x_k^7}{14} - \frac{5 x_k^8}{168 x_8} \right];
\frac{1}{32} \left[\frac{x_8^8}{112} - \frac{5 x_8^9}{1512 x_8} \right];
\text{ and }
\frac{1}{1792} \left[\frac{17 x_8^8}{54}\right].
$$
The remaining integrations bring us to $\frac{85}{12 \cdot 12!}$.

There are $3!$ orderings of the remaining variables,
$k$ may be either 4 or 6, and $\X_3$ and $\X_7$ can again be
reversed, giving us a total of 24 orderings in this subcase.

\begin{flushleft}
\underline{Subcase 4iii}: The seven smallest variables are
$\X_1, \X_2, \X_3, \X_7, \X_4$, $\X_5$ and $\X_8$.
\end{flushleft}
The first integration is similar to the first integration
in Subcase 4i, and we proceed from there, via the following
integrands:
$$
\frac{1}{4} \left[ \frac{x_4^6}{12} - \frac{x_4^7}{14 x_8}
   - \frac{x_4^8}{48 x_5^2} + \frac{x_4^8}{24 x_8 x_5}\right];
\frac{1}{12096} \left[ {29 x_5^7} - \frac{13 x_5^8}{x_8} \right];
\frac{157 x_8^8}{870912}.
$$
The remaining integrations bring us to $\frac{785}{108 \cdot 12!}$.

There are $3!$ orderings of the remaining variables, and
$\X_3$ and $\X_7$ can be again be switched, giving 12
orderings in this subcase.

\begin{flushleft}
\underline{Subcase 4iv}: The seven smallest variables are
$\X_1, \X_2, \X_3, \X_4, \X_7$, $\X_5$ and $\X_8$.
Unlike the previous cases, we have $\X_j=\X_4$, so we
restart with just the inner 3 integrals evaluated
at the top of the section:
\end{flushleft}
$$
\int_0^1 \int_0^{x_{i_{10}}}
 \int_0^{x_{i_9}} \int_0^{x_{i_8}} \int_0^{x_8}
 \int_0^{x_5} \int_0^{x_7}
 \frac{x_4^4}{4}
  \left[ \frac{x_7}{2} - \frac{x_7^2}{2 x_8}
  - \frac{x_7^3}{6 x_5^2} + \frac{x_7^3}{3 x_8 x_5}
  \right]
  \d x_4 \d x_7 \d x_5 \d x_8 \d x_{i_8} \d x_{i_9}
 \d x_{i_{10}}
$$
$$
= \frac{1}{20}\int_0^1 \int_0^{x_{i_{10}}}
 \int_0^{x_{i_9}} \int_0^{x_{i_8}} \int_0^{x_8}
 \int_0^{x_5}
  \left( \frac{x_7^6}{2} - \frac{x_7^7}{2 x_8}
  - \frac{x_7^8}{6 x_5^2} + \frac{x_7^8}{3 x_8 x_5} \right)
  \d x_7 \d x_5 \d x_8 \d x_{i_8} \d x_{i_9}
 \d x_{i_{10}}
$$
$$
= \frac{1}{540}\int_0^1 \int_0^{x_{i_{10}}}
 \int_0^{x_{i_9}} \int_0^{x_{i_8}} \int_0^{x_8}
  \left(
  \frac{10 x_5^7}{7} - \frac{11 x_5^8}{16 x_8} \right)
  \d x_5 \d x_8 \d x_{i_8} \d x_{i_9}
 \d x_{i_{10}}
$$
$$
= \frac{1}{540}\int_0^1 \int_0^{x_{i_{10}}}
 \int_0^{x_{i_9}} \int_0^{x_{i_8}}
  \frac{103 x_8^8}{1008}
  \d x_8 \d x_{i_8} \d x_{i_9}
 \d x_{i_{10}}
= \frac{103 \cdot 8!}{544320 \cdot 12!} = \frac{206}{27 \cdot 12!}.
$$
There are $3!$ orderings of the remaining variables,
and $\X_3$ cannot be interchanged with $\X_7$ due to
the interceding $\X_4$.

\begin{flushleft}
\underline{Subcase 4v}: The eight smallest variables are
$\X_1, \X_2, \X_3, \X_7, \X_5$, $\X_k$, $\X_l$ and $\X_8$.
\end{flushleft}
This follows Subcase 4ii until we arrive at integrand
$\frac{1}{32} \left[\frac{x_k^7}{14} - \frac{5 x_k^8}{168 x_8}\right]$.
We continue through integrands
$$\frac{1}{32} \left[ \frac{x_l^8}{112} - \frac{5 x_l^9}{1512 x_8}\right]
\text{ and } \frac{x_8^9}{96768}$$
and eventually to $\frac{4}{15 \cdot 12!}$.

There are two orderings of the remaining variables, $(k,l)$ can
be $(4,6)$ or $(6,4)$ and $\X_3$ and $\X_7$ may be reversed,
giving a total of 8 orderings in this subcase.

\begin{flushleft}
\underline{Subcase 4vi}: The eight smallest variables are
$\X_1, \X_2, \X_3, \X_7, \X_4$, $\X_5$, $\X_6$ and $\X_8$.
\end{flushleft}

This follows Subcase 4iii until we arrive at the integrand
$\frac{1}{12096} \left[ {29 x_5^7} - \frac{13 x_5^8}{x_8} \right]$.
Two more integrations bring integrands:
$$
\frac{1}{12096} \left[ \frac{29 x_6^8}{8} - \frac{13 x_6^9}{9 x_8} \right]
\text{ and } \frac{1}{12096}\left[ \frac{31 x_8^9}{120}\right].
$$
Continuing we get $\frac{31}{4 \cdot 9!}$.
There are two orderings of the remaining variables, and
$\X_3$ and $\X_7$ can be reversed, giving a total of 4
orderings in this subcase.

\begin{flushleft}
\underline{Subcase 4vii}: The eight smallest variables are
$\X_1, \X_2, \X_3, \X_4, \X_7$, $\X_5$, $\X_6$ and $\X_8$.
\end{flushleft}
This follows Subcase 4iv until we arrive at the integrand
$\frac{1}{540} \left[\frac{10 x_5^7}{7} - \frac{11 x_5^8}{16 x_8} \right]$.
Continuing, we see:
$$
\frac{1}{540} \left[  \frac{10 x_6^8}{56} - \frac{11 x_6^9}{144 x_8} \right]
\text{ and } \frac{1}{540} \left[\frac{123 x_8^9}{10080} \right]
$$
on our way to $\frac{41}{5 \cdot 12!}$.
In this subcase, $\X_3$ and $\X_7$ cannot be interchanged, and
there are 2 orderings of the remaining 2 variables.

This completes Case 4, which contains the remaining 104
possible orderings of the variables,  and the terms
in (\ref{eq:pieces}) corresponding to these
orderings of the variables have total weight per symmetry
class of $\frac{3627}{5 \cdot 12!}$.

Summing over the four cases, the contributions of
$\frac{13140}{12!}$, $\frac{235797}{35 \cdot 12!}$,
$\frac{6651}{7 \cdot 12!}$ and $\frac{3627}{5 \cdot 12!}$
respectively, give a total of
$\frac{107763}{5 \cdot 12!}$ summed over the 5040 symmetry
class representatives in (\ref{eq:pieces}).
Multiplying by the $720$ symmetries of the variables, we
find that
$\eu(5)=\frac{35921}{1108800}=0.032396\overline{284271}$.

\bigskip

As noted in Section~\ref{se:recur},
the computed value of $\eu(5)$ is used in a lower
bound for $\eu(n)$ for $n \ge 5$.

It is easy to compute the first few digits of this number
by simulating the ten uniform random variables.
A short MATLAB program sampled the ten variables $10^{12}$ times
and computed the minimum, the computed number agreed
with our calculation to the 7th decimal place.
While this number arises in a relatively simple way,
we do not know of it arising in other places.

Unfortunately, it would be much harder to use such a simulation
to get approximate values of $\eu(6)$ or $\eu(7)$. The proof method
used above for $K_5$ does not generalize to $K_6$ or $K_7$ either.
To simulate $\eu(6)$ we would
need to catalogue the minimal ways of drawing $\K_6$,
i.e.~drawings $\D$ for which $\XX(\D)$ is inclusion-wise minimal.

%
%
\section{Other distributions and moments} 

For some other very simple discrete probability distributions,
it is possible to compute the expected crossing number
exactly.  Here we do this for the simplest example and conclude that the
first two moments of the distribution do not determine
the expectation.

Consider for $0 \le t < u$, the discrete distribution where edges
have weight $t$ or $u$ with probability $\frac{1}{2}$.
Let $\edisc(n,t,u)$ be the expected weighted crossing number
of $\K_n$ with the distribution for given $t,u$; if the
parameter $u$ is omitted we will assume it is $1-t$.
Then it is easy to see that
$$
\crn(\K_5,w)= \left\{
 \begin{array}{ll}
   t^2 & \text{if there is a pair of non-adjacent edges of weight } t \\
   u^2 & \text{if all edges have weight } u \\
   tu & otherwise.
 \end{array}
\right.
$$

All $2^{10}$ possible assignments of $t$'s and $u$'s to the edges
are equally likely.  There is only one way for all edges to have
weight $u$.  Otherwise, if we do not have two non-adjacent edges of weight
$t$, we must either have all edges of weight $t$ incident with a single vertex, or
three edges forming a triangle.  In the former situation, we may
have one edge (10 assignments), two edges (30 assignments),
three edges (20 assignments) or four edges (5 assignments).
For the triangles, we have 10 more assignments.  The remaining
948 assignments of $t$'s and $u$'s to the edges have a pair
of non-adjacent edges of weight $t$.
Therefore, $\edisc(5,t,u)=\frac{1}{1024}(948 t^2 + 75 tu + u^2)$,
which simplifies to
$\edisc(5,t)=\frac{1}{1024}(874 t^2 + 73t +1)$
when $u=1-t$.

The mean and variance of the considered discrete distribution are
$\frac{u+t}{2}$ and $\frac{(u-t)^2}{4}$, respectively.
If we take $u=1-t$,
then the mean is $\frac{1}{2}$, matching the the mean of
the uniform distribution, while the variance is $\frac{(1-2t)^2}{4}$.
Since the variance of the uniform case is $\frac{1}{12}$, by choosing
$t=\frac{3-\sqrt{3}}{6}$, we get a distribution that matches
the uniform distribution in its first two moments.
However the above calculation shows that
$$\edisc(5,\frac{3-\sqrt{3}}{6})=\frac{1973-947\sqrt{3}}{6144}
 \approx 0.05416 > \eu(5).$$

We conclude that the first two moments of the input distribution
on the edges are not sufficient to determine the expected crossing
number. We believe that a constant number of higher moments is
not sufficient either. Perhaps, up to ${n \choose 2}$ moments are required.

%
%
\section{Asymptotics}\label{se:asym}

Some standard arguments used for crossing number estimates work also for
the expectations.
In this section we show that simple adaptations of these arguments
show that $\eu(n)$ is $\Theta(n^4)$.  Since $\crn(\K_n)$ is
$O(n^4)$ and an upper bound for $\eu(n)$, we need only show the
lower bound.  We remark that the asymptotic upper bound $\crn(\K_n)$
can be obtained trivially from the fact that there are only
$O(n^4)$ pairs of edges in $\K_n$, but that much better constructive
upper bounds exist and are an ongoing research challenge, see for
instance \cite{AAK,PR07}.

\subsection{Asymptotics via a recurrence}\label{se:recur}

We recall that we denote the crossing weight of a given
drawing $\D$ of a graph weighted by $w$ as $\crn(\D,w)$,
and the weighted crossing number of $G$ weighted by $w$
(i.e.~the minimum over all drawings) by $\crn(G,w)$.

Given a drawing $\D$ of $\K_n$ with weights $w$, we can consider 
the induced drawings of copies of $\K_n-v \approx \K_{n-1}$ obtained
by removing each vertex $v\in V = V(K_n)$ from $\K_n$ in turn.  Then
\begin{equation}
\sum_{v \in V} \crn(\D|_{\K_n-v},w|_{\K_n-v}) =
   (n-4) \crn(\D, w)
   \label{eq:recur}
\end{equation}
since each pair of disjoint edges $ij, i'j'$ of $\K_n$ appear
in all but four of the terms on the left side of (\ref{eq:recur}).

Now consider $\K_n$ for $n>4$ with a fixed weighting $w$.
There is some optimal drawing $\D^*$ of $\K_n$ such that
$\crn(\K_n,w)=\crn(\D^*,w)$.
Now:
\begin{align*}
\crn(\K_n,w) & =\crn(\D^*,w) = \frac{1}{n-4} \sum_{v \in V}
  \crn(\D^*|_{\K_n-v},w|_{\K_n-v}) \\
  & \ge \frac{1}{n-4} \sum_{v \in V}
  \min_{\D} \crn(\D|_{\K_n-v},w|_{\K_n-v})
 = \frac{1}{n-4} \sum_{v \in V}
  \crn(\K_n-v,w|_{\K_n-v}).
\end{align*}
If the weights in $w$ are i.i.d.~random variables, we can take
expectations on both sides to get $\eu(n) \ge \frac{n}{n-4} \eu(n-1)$.
Applying this inequality recursively, we find for $n \ge 6$ that
$\eu(n) \ge \tfrac{1}{5}{n \choose 4} \eu(5)$.

\subsection{Asymptotics via the Crossing Lemma}

The following result, known as the Crossing Lemma, was proved
independently by Ajtai et al. \cite{ACNS} and Leighton \cite{Lei}.
The version given below (with the specific constant $1024/31827 > 0.032$)
is due to Pach et al.~\cite{PRTT}.

\begin{theorem}[Crossing Lemma]
Let $G$ be a graph of order $n$ with $m \ge \frac{103}{16}n$ edges. Then
$$ \crn(G) \ge \frac{1024}{31827}\, \frac{m^3}{n^2}.$$
\end{theorem}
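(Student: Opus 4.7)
The plan is to apply the classical probabilistic amplification of Ajtai--Chv\'atal--Newborn--Szemer\'edi and Leighton, with the sharper constant coming from a refined linear lower bound.

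The first ingredient is a linear lower bound of the form $\crn(G) \geq a m - b n$. Using Euler's formula one obtains the basic version $\crn(G) \geq m - 3(n-2)$: removing one edge from each crossing in an optimal drawing exposes a planar subgraph, which satisfies $m \leq 3n-6$. For the improved constant $\tfrac{1024}{31827}$ one needs the sharper inequality $\crn(G) \geq 4m - \tfrac{103}{6}(n-2)$, which requires a more intricate argument exploiting bounds on $k$-planar graphs (graphs drawable with at most $k$ crossings per edge) and is the main obstacle in the proof.

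The second ingredient is random sampling. Fix an optimal drawing $\D^*$ of $G$ attaining $\crn(G)$. For $p \in (0,1]$, form the random induced drawing $\D'$ by retaining each vertex of $G$ independently with probability $p$, and let $n'$, $m'$, $X'$ denote its vertex, edge, and crossing counts. Since each crossing involves four distinct vertices, $\E[n'] = pn$, $\E[m'] = p^2 m$, and $\E[X'] = p^4 \crn(G)$. Using $\crn(G') \leq X'$, the linear bound applied to $G'$ and averaged gives
\begin{equation*}
  p^4 \crn(G) \;\geq\; 4 p^2 m - \tfrac{103}{6}\, p n,
\end{equation*}
with the linear bound being vacuous (and the inequality hence trivially true) whenever $m'$ is too small.

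Finally, divide by $p^4$ to obtain $\crn(G) \geq 4m/p^2 - (103/6) n/p^3$ and maximize in $p$. Elementary calculus locates the optimum at $p^* = \tfrac{103\, n}{16\, m}$, and the condition $p^* \leq 1$ is exactly the theorem's hypothesis $m \geq \tfrac{103}{16}\,n$. Substituting $p = p^*$ yields
\begin{equation*}
  \crn(G) \;\geq\; \frac{4 \cdot 4^3}{27 \cdot (103/6)^2} \cdot \frac{m^3}{n^2} = \frac{1024}{31827} \cdot \frac{m^3}{n^2},
\end{equation*}
the desired conclusion. All the subtlety is concentrated in the base linear bound; the amplification and optimization steps are routine.
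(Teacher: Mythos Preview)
The paper does not prove this theorem at all: it is quoted as a known result, attributed to Ajtai--Chv\'atal--Newborn--Szemer\'edi and Leighton, with the specific constant $1024/31827$ credited to Pach, Radoi\v{c}i\'c, Tardos, and T\'oth~\cite{PRTT}. So there is no ``paper's own proof'' to compare against.

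Your sketch is, in outline, exactly the argument used in~\cite{PRTT}: a sharpened linear bound of the shape $\crn(G) \ge am - b(n-2)$ (with the particular pair $a=4$, $b=103/6$ coming from edge bounds for $k$-planar graphs, $k\le 3$), amplified by random vertex sampling and optimized in $p$. Your arithmetic checks out: the optimum $p^*=\tfrac{3b}{2a}\cdot\tfrac{n}{m}=\tfrac{103n}{16m}$ recovers both the hypothesis $m\ge\tfrac{103}{16}n$ and the constant $\tfrac{4a^3}{27b^2}=\tfrac{1024}{31827}$. You are also right that all the real work is hidden in the linear bound; the amplification step is routine. Since the paper simply cites the result, there is no methodological divergence to discuss.
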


Let $\pi$ be a probability distribution with expectation $\E(\pi)=\mu$.
We define the {\em complementary probability distribution} $\pi^*$ by
setting $\pi^*(\mu+x) = \pi(\mu-x)$. For the purpose of the following argument,
let us assume that our probability distribution is symmetric, i.e.,
$\pi = \pi^*$. Then, given a random weight function $w$, the
{\em complementary weight function} $w^*$, defined as $w^*(e) = 2\mu-w(e)$,
has the same distribution as $w$. Let us define $w'$ to be either
$w$ or $w^*$, so that $w'(e)\ge \mu$ holds for at least half of the
edges $e\in E(G)$. Finally, let $w_1$ be defined as $w_1(e)=0$ if
$w'(e)<\mu$, and $w_1(e)=1$ if $w'(e)\ge\mu$. Since
$\crn(G,w)+\crn(G,w^*) \ge \crn(G,w') \ge \mu^2\crn(G,w_1)$,
the following holds:
\begin{eqnarray*}
   \E(\crn(G,w)) &=& \tfrac{1}{2}\E(\crn(G,w)+\crn(G,w^*))
     \ge \tfrac{1}{2}\E(\crn(G,w')) \\
     &\ge& \frac{\mu^2}{2}\E(\crn(G,w_1))
     \ge \frac{\mu^2}{2}\cdot \frac{1024}{31827}\, \frac{(m/2)^3}{n^2}
     = \frac{64\mu^2}{31827}\, \frac{m^3}{n^2}.
\end{eqnarray*}
This gives a version of the crossing lemma for expectations.
With a little more care we can improve the above bound
and also get rid of the symmetry condition. In order to do this, we
replace the mean by the {\em median}, i.e.\ the largest number
$\nu$ such that $Prob[w(e)\ge \nu]\ge \frac{1}{2}$.

\begin{theorem}[Crossing Lemma for Expectations]
Let $G$ be a graph of order $n$ with $m \ge \frac{103}{16(1-4^{-1/3})}\,n$ edges.
Suppose that each edge $e\in E(G)$ gets a random weight $w(e)$,
where the weights of distinct edges are independent non-negative
random variables (not necessarily i.d.) whose median is at least $\nu>0$. Then
$$ \E(\crn(G,w)) \ge \frac{128\nu^2}{31827}\cdot \frac{m^3}{n^2}.$$
\end{theorem}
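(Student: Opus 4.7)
The strategy is to parallel the argument just given for symmetric distributions while bypassing the averaging with the complementary weight $w^*$ that cost a factor of $\tfrac12$; replacing $\mu$ by the median $\nu$ lets one work directly with $w$, at the price of making the property ``at least half the edges have weight $\ge\nu$'' probabilistic rather than deterministic.

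First I would introduce the random indicator $w_1(e)=\mathbf{1}[w(e)\ge\nu]$ and the corresponding random subgraph $G_1\subseteq G$ with edge set $\{e:w_1(e)=1\}$, writing $M=|E(G_1)|$. By independence and the definition of median, $\Pr[w_1(e)=1]\ge\tfrac12$, so $\E[M]\ge m/2$. The pointwise bound $\crn(G,w)\ge\nu^2\crn(G_1)$ then holds because in every drawing of $G$, each crossing whose two edges both belong to $G_1$ contributes at least $\nu^2$ to the weighted crossing sum; restricting an arbitrary drawing to $G_1$ can only drop crossings, so this survives minimizing over drawings.

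Taking expectations, $\E[\crn(G,w)]\ge\nu^2\E[\crn(G_1)]$, and the next task is to bound $\E[\crn(G_1)]$ from below despite the fact that the Crossing Lemma only applies when $M\ge\tfrac{103}{16}n$. I would handle this via an \emph{unconditional} form: for every graph $H$ on $n$ vertices with $x$ edges, $\crn(H)\ge\tfrac{1024}{31827}\cdot\tfrac{x^3}{n^2}-C$ for a specific constant $C$. This is immediate for $x\ge\tfrac{103}{16}n$ from the Crossing Lemma; for smaller $x$ it reduces to checking that the right-hand side is $\le 0$, which yields the sharp choice $C=\tfrac{103n}{12}$ via the identity $\tfrac{1024}{31827}\bigl(\tfrac{103}{16}\bigr)^3=\tfrac{103}{12}$, coming from $31827=3\cdot 103^2$. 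Since the right-hand side is convex in $x$, Jensen's inequality together with $\E[M]\ge m/2$ gives
\[
   \E[\crn(G_1)] \;\ge\; \frac{1024}{31827\,n^2}\,(\E[M])^3 - C \;\ge\; \frac{128\,m^3}{31827\,n^2} - \frac{103\,n}{12}.
\]

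The last step is purely algebraic: one checks that the hypothesis $m\ge\frac{103}{16(1-4^{-1/3})}\,n$ is calibrated precisely so that, after multiplying by $\nu^2$, the linear correction $\tfrac{103n}{12}$ is absorbed into the cubic main term and the clean bound $\tfrac{128\nu^2}{31827}\cdot\tfrac{m^3}{n^2}$ emerges. I expect this final calibration to be the main obstacle, since the cube-root factor $4^{-1/3}$ must exactly compensate the amortization loss---possibly necessitating a slight tightening of the amortized Crossing Lemma (for instance by bringing in Euler's bound $\crn(H)\ge x-3n+6$ for $x$ in the intermediate range) to squeeze out the stated constant rather than a marginally smaller one.
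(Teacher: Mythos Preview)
Your reduction $\crn(G,w)\ge\nu^2\crn(G_1)$ via the median, and hence $\E[\crn(G,w)]\ge\nu^2\,\E[\crn(G_1)]$, is correct and is exactly what the paper does. The gap is in your lower bound for $\E[\crn(G_1)]$.

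Your amortized Crossing Lemma $\crn(H)\ge \tfrac{1024}{31827}\,x^3/n^2-\tfrac{103}{12}n$ and the Jensen step give
\[
\E[\crn(G_1)]\;\ge\;\frac{128\,m^3}{31827\,n^2}-\frac{103\,n}{12},
\]
but the additive term $-\tfrac{103}{12}n$ cannot be ``absorbed'' to leave the constant $\tfrac{128}{31827}$ intact: you would need the impossible inequality $\tfrac{128\,m^3}{31827\,n^2}-\tfrac{103}{12}n\ge\tfrac{128\,m^3}{31827\,n^2}$. At the threshold $m=\tfrac{103}{16(1-4^{-1/3})}\,n$ the correction equals $8(1-4^{-1/3})^3\approx 0.40$ times the main term, so your argument yields roughly $60\%$ of the stated constant, and no refinement of the amortized bound (Euler or otherwise) repairs this, since the loss comes from the Jensen step itself and from discarding the terms $k<\tfrac{103}{16}n$ outright. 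The threshold $\tfrac{103}{16(1-4^{-1/3})}$ is \emph{not} calibrated for your argument.

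The paper proceeds differently after the same first reduction. First, because $\Pr[w_1(e)=1]\ge\tfrac12$ rather than $=\tfrac12$, it couples $G_1$ to a subgraph $F'\subseteq G_1$ in which each edge appears with probability exactly $\tfrac12$, using monotonicity of $\crn(\cdot)$ to get $\E[\crn(G_1)]\ge 2^{-m}\sum_{F'\subseteq G}\crn(F')$. Second, instead of an amortized bound plus Jensen, it lower-bounds $2^{-m}\sum_{k\ge T}\binom{m}{k}\tfrac{1024}{31827}\,k^3/n^2$ (with $T=\tfrac{103}{16}n$) by the pairing $k\leftrightarrow m-k$: since $\binom{m}{k}=\binom{m}{m-k}$ and $k^3+(m-k)^3\ge\tfrac14 m^3$, and since the hypothesis on $m$ is chosen precisely so that $(m-k)^3\ge\tfrac14 m^3$ whenever $k<T$, one gets $\sum_{k\ge T}\binom{m}{k}k^3\ge\tfrac18\,2^m m^3$ with no additive loss. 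That pairing, exploiting the exact binomial symmetry, is the missing idea that produces the clean constant $\tfrac{128}{31827}$.
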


\begin{proof}
Given $w$, we introduce related weights $w''$ and $w_2$ in a similar
(but not identical) way as above: we let $w''(e)=0$ if
$w(e)<\nu$, and $w''(e)=\nu$ if $w(e)\ge\nu$; we let $w_2(e)=w''(e)/\nu$
be the corresponding weight with values 0 and 1. Note that
$Prob[w''(e)=\nu]\ge \frac{1}{2}$ and $Prob[w_2(e)=1]\ge \frac{1}{2}$.
Similarly as before, we have $\crn(G,w) \ge \crn(G,w'') = \nu^2\crn(G,w_2)$.

Note that $w$ determines a spanning subgraph $F_w\subseteq G$, whose edges are those
 edges of $G$ for which $w_2(e)=1$. The graph $F_w$ is a random subgraph of $G$,
and for each spanning subgraph $F$ of $G$ we let $Prob(F)$ be the probability that
$F=F_w$. We will need a lower bound for the
sum $\sum \crn(F)Prob(F)$ taken over all (spanning) $F\subseteq G$.
To do this, let us define $F'\subseteq F$ as the spanning subgraph of $G$
such that $e\in E(F')$ if $w(e)\ge \nu_e\ge\nu$, where $\nu_e$ is the median of 
the random variable $w(e)$. 
The threshold case when $w(e)=\nu_e$ is to be considered so that
$Prob[e\in E(F')]=\frac{1}{2}$. Then $F'$ is also a random spanning subgraph of $G$ and
$Prob(F')=2^{-m}$. Since the event that an edge $e$ is in $F'$ is contained in the
event that $e\in E(F)$, we have for each $F$
$$Prob(F) = \sum_{F'\subseteq F} \alpha(F,F') Prob(F'),$$
where $\alpha(F,F')\ge0$ is the probability that we have $F_w=F$ under the condition
that $F'$ is given.
Clearly, $\sum_{F\supseteq F'} \alpha(F,F')=1$ for every fixed $F'$.
Since $\crn(F')$ is an increasing function, we have:
\begin{eqnarray*}
  \sum_{F\subseteq G} \crn(F)Prob(F) 
     &=& \sum_{F\subseteq G} \crn(F)\sum_{F'\subseteq F} \alpha(F,F') Prob(F')\\
     &\ge& \sum_{F'\subseteq G} \crn(F') Prob(F') \sum_{F\supseteq F'} \alpha(F,F') \\
     &=& \sum_{F'\subseteq G} \crn(F')Prob(F')
     = \sum_{F'\subseteq G} 2^{-m}\crn(F').
\end{eqnarray*}

We will employ another notion:
$$
   \lambda(k,n) = \min\{ \crn(F)\mid |V(F)|=n, |E(F)|=k\}.
$$
By the Crossing Lemma, $\lambda(k,n) \ge \frac{1024}{31827}\frac{k^3}{n^2}$
if $k\ge \tfrac{103}{16}n$.
Using the introduced quantities, we obtain the following estimate:
\begin{eqnarray*}
  \E[\crn(G,w)] &=& \int_{\R^E} \crn(G,w) \d w
     \ge \int_{\R^E} \crn(G,w') \d w = \nu^2 \int_{\R^E} \crn(G,w_1) \d w \\
     &\ge& \nu^2 \sum_{F\subseteq G} \crn(F)Prob(F)
     \ge \nu^2 \sum_{F'\subseteq G} 2^{-m}\crn(F')\\
     &\ge& \nu^2 \sum_{k=0}^m ~ \sum_{F\subseteq G, |E(F)|=k} 2^{-m}\crn(F) \\
     &\ge& \nu^2 \, 2^{-m} \sum_{k=0}^m \binom{m}{k} \lambda(k,n) \\
     &\ge& \frac{1024\nu^2}{31827\cdot 2^m n^2}
           \sum_{k=\lceil 103n/16\rceil}^m \binom{m}{k} k^3.
\end{eqnarray*}
Note that we have $k^3 + (m-k)^3 \ge \tfrac{1}{4}m^3$ for $0\le k\le m$,
and that for $k < 103n/16$, we have
$(m-k)^3 \ge (m-103n/16)^3 \ge (m-(1-4^{-1/3})m)^3 = \tfrac{1}{4}m^3$.
Thus,
$$
   \sum_{k=\lceil 103n/16\rceil}^m \binom{m}{k} k^3 \ge
   \tfrac{1}{2}\sum_{k=0}^m \binom{m}{k} \tfrac{1}{4}m^3 = \tfrac{1}{8}2^m m^3.
$$
The above inequalities imply:
$$
   \E[\crn(G,w)]
     \ge \frac{1024\nu^2}{31827\cdot 2^m n^2} \,\frac{1}{8}2^m m^3
      =  \frac{128 \nu^2 \cdot m^3}{31827\cdot n^2}
$$
which we were to prove.
\end{proof}

\subsection*{Acknowledgments}
B.M. was supported in part by an NSERC Discovery Grant (Canada), by the Canada Research Chair program, and by the Research Grant P1--0297 of ARRS (Slovenia). He is on leave from: IMFM \& FMF, Department of Mathematics, University of Ljubljana, Ljubljana, Slovenia.
T.S. was supported in part by an NSERC Discovery Grant.
The authors are grateful to Luis Goddyn for some helpful discussions on the subject.

%
%
\bibliographystyle{amsalpha}

\bibliography{refs}

\end{document}